\theoremstyle{plain}
\newtheorem{theorem}{Theorem}[section]
\newtheorem{lemma}[theorem]{Lemma}
\newtheorem{proposition}[theorem]{Proposition}
\newtheorem{corollary}[theorem]{Corollary}
\theoremstyle{definition}
\newtheorem{definition}[theorem]{Definition}
\theoremstyle{remark}
\newtheorem{remark}[theorem]{Remark}
\newcommand{\Dgen}{\mathfrak{D}_{\psi, \omega}^{\alpha}}
\newcommand{\Igen}{\mathfrak{I}_{\psi, \omega}^{\alpha}}
\newcommand{\Iclass}{\mathcal{I}_{-\infty}^{\alpha}}
\newcommand{\Fgen}{\mathcal{F}_{\psi, \omega}}
\title{\textbf{Spectral Analysis of Weighted Weyl Fractional Operators: Aging, Infinite Memory, and the Amnesia Effect}}
\author{\textbf{Gustavo Dorrego} \thanks{Corresponding author. Email: gadorrego@exa.unne.edu.ar}}
\affil{\small{\textit{Department of Mathematics, Facultad de Ciencias Exactas y Naturales y Agrimensura, Universidad Nacional del Nordeste, Corrientes, Argentina.}}}
\date{}
\begin{document}

\maketitle

\begin{abstract}
This paper establishes a rigorous spectral framework for the Weighted Weyl Fractional Calculus, designed to model non-local systems exhibiting aging and subjective time scales. By constructing a conjugation map involving a time-dependent weight $\omega(t)$ and a scale function $\psi(t)$, we define a new class of fractional operators that preserve the spectral tractability of time-invariant systems. We derive the Spectral Mapping Theorem for these operators and prove that Weighted Mittag-Leffler functions act as their fundamental eigenfunctions, demonstrating that the Weighted Fourier Transform naturally diagonalizes the associated evolution equations. As a physical application, we formulate a constitutive law for aging viscoelastic materials with infinite memory. Crucially, we analytically demonstrate the "Amnesia Phenomenon": we prove that rapid aging modulates the system's history, effectively transforming the hereditary power-law decay into a short-range exponential relaxation. This result provides a closed-form explanation for the loss of memory in fast-aging media, overcoming the computational bottlenecks of standard discretization methods.

\textbf{Keywords:} Weighted fractional calculus; Spectral analysis; Weyl derivative; Aging viscoelasticity; Infinite memory; Non-local operators.

\vspace{0.2cm}
\noindent \textbf{MSC 2020:} 26A33, 42A38, 47B38.
\end{abstract}

\section{Introduction}

Fractional calculus has proven to be an indispensable tool for modeling phenomena with non-local and hereditary memory. As eloquently stated by Stinga in his foundational work \cite{Stinga}, the fractional derivative can be understood through the metaphor of a ``walking elephant'': a process that moves forward while dragging the complete history of its previous steps. Unlike classical Markovian models, where the future depends solely on the present, the ``fractional elephant'' never completely forgets its remote past. However, a crucial distinction must be made regarding the direction of this walk. While spatial fractional operators allow the elephant to sense both forward and backward, time-evolution models must be strictly unilateral to respect causality.

Recently, the formulation of fractional operators with respect to functions has gained significant attention. For instance, Fernandez et al. \cite{Fernandez2024} introduced a definition of the fractional Laplacian with respect to a function. While mathematically robust, their approach focuses on the symmetric setting, which is ideal for spatial non-locality but ill-suited for time-dependent processes where the future cannot influence the past. In contrast, our work focuses strictly on the unilateral weighted Weyl derivative. We address the specific challenges of aging and causality in evolution equations, ensuring that our ``elephant'' only remembers the path traveled, never the path yet to come.

From a physical perspective, the classical Weyl operator assumes a homogeneous memory kernel, implying that the elephant walks on flat terrain with a constant internal clock. Yet, in complex systems such as aging viscoelastic materials or financial markets with volatility clustering, the ``memory'' of the system changes over time. To address this, we propose a generalized operator $\mathfrak{D}_{\psi,\omega}^{\alpha}$ that modulates the memory intensity via a weight $\omega(t)$ and distorts the time flow via a scale $\psi(t)$. In this generalized metaphor, the elephant traverses a dynamic landscape where the ground density changes ($\omega$), making the drag heavier or lighter, and time is perceived subjectively ($\psi$). This construction allows us to model ``subjective time'' while preserving the strict causality required for physical realism.

The paper is organized as follows. In Section 2, we review the preliminary definitions of the Weighted Fourier Transform and the associated functional spaces. In Section 3, we formally introduce the weighted Weyl fractional operators, establishing their domain and mapping properties. Section 4 is devoted to the spectral analysis, where we derive the diagonalization of the fractional operator and identify the weighted Mittag-Leffler functions as the fundamental eigenfunctions. Finally, in Section 5, we apply this framework to the mechanics of \textit{aging viscoelastic materials}. We solve the generalized fractional Kelvin-Voigt model on the entire real line and analyze the "Stinga effect" of memory modulation, showing how rapid aging shortens the effective memory of the system. We conclude with final remarks in Section 6.
\section{Preliminaries and Operator Construction}

Before constructing the operators, we outline the standing assumptions on the structural parameters that will be maintained throughout this work. We posit the following:

\begin{itemize}
    \item[\textbf{(H1)}] \textbf{Condition on the scale function:} Let $\psi: \mathbb{R} \to \mathbb{R}$ be a strictly increasing $C^1$-diffeomorphism satisfying $\psi'(t) > 0$ for all $t \in \mathbb{R}$, with the asymptotic behavior $\lim_{t\to \pm\infty}\psi(t) = \pm\infty$.
    
    \item[\textbf{(H2)}] \textbf{Condition on the weight function:} Let $\omega: \mathbb{R} \to (0, \infty)$ be a measurable function such that both $\omega$ and $1/\omega$ are locally bounded ($L^\infty_{\text{loc}}$).
\end{itemize}

These hypotheses guarantee that the induced weighted spaces are well-defined and allow us to establish the necessary unitary isomorphisms for our spectral analysis. Following a constructive Riemann-Liouville approach, we first introduce the integral operator and subsequently define the fractional derivative via the differentiation of the integral.

\subsection{Definition via Operator Conjugation}
\subsubsection{The Weighted Weyl Integral}

We extend the classical Weyl fractional integral, $\Iclass$, by incorporating the scale $\psi$ and weight $\omega$. To ensure a consistent mathematical structure, we adopt the conjugation approach. This methodology aligns with the general framework for weighted operators recently systematized by Fernandez and Fahad \cite{FractalFract}, which relies on the foundational transmutation properties established by Samko et al. \cite{Samko}.

\begin{definition}[Weighted $\psi$-Weyl Integral]
Let $\alpha > 0$. We define the integral operator $\Igen$ acting on a function $f$ via the conjugation:
\begin{equation}
    \Igen := M_{\omega}^{-1} \circ Q_{\psi} \circ \Iclass \circ Q_{\psi}^{-1} \circ M_{\omega}.
\end{equation}
Its explicit integral representation is given by:
\begin{equation} \label{eq:integral_def}
    \Igen f(t) = \frac{1}{\Gamma(\alpha)\omega(t)} \int_{-\infty}^{t} (\psi(t) - \psi(\tau))^{\alpha - 1} \omega(\tau) f(\tau) \psi'(\tau) d\tau.
\end{equation}
\end{definition}
\subsubsection{Isometry and Existence of the Weighted Integral}

We now establish the mapping properties of the conjugation operator, which serves as the foundation for the existence of the weighted fractional integral.

\begin{definition}[Admissible Space $L_{\psi,\omega}^{1}$]
We define the weighted Lebesgue space $L_{\psi,\omega}^{1}(\mathbb{R})$ as the Banach space of measurable functions $u: \mathbb{R} \to \mathbb{C}$ equipped with the norm:
\begin{equation}
    \|u\|_{1,\psi,\omega} := \int_{-\infty}^{\infty} |u(t)|\omega(t)\psi'(t)dt < \infty.
\end{equation}
This space corresponds to $L^1(\mathbb{R}, d\mu_{\psi,\omega})$ with the measure $d\mu_{\psi,\omega}(t) = \omega(t)\psi'(t)dt$.
\end{definition}

\begin{lemma}[Isometric Isomorphism] \label{lemma:isometry}
Let $\mathcal{T}_{\psi,\omega}$ be the conjugation map defined by:
\begin{equation}
    (\mathcal{T}_{\psi,\omega} u)(\tau) := (Q_{\psi}^{-1} \circ M_{\omega})u(\tau) = \omega(\psi^{-1}(\tau))u(\psi^{-1}(\tau)), \quad \tau \in \mathbb{R}.
\end{equation}
Then, $\mathcal{T}_{\psi,\omega}$ is an isometric isomorphism from $L_{\psi,\omega}^{1}(\mathbb{R})$ onto the classical space $L^{1}(\mathbb{R})$.
\end{lemma}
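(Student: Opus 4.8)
The plan is to verify the four defining properties of an isometric isomorphism — measurability/well-definedness, linearity, norm preservation, and surjectivity — using the change-of-variables formula for the $C^1$-diffeomorphism $\psi$ as the central computational device, together with the explicit candidate inverse $\mathcal{T}_{\psi,\omega}^{-1} = M_\omega^{-1} \circ Q_\psi$.

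First I would settle measurability and well-definedness. Since $\psi$ is a $C^1$-diffeomorphism of $\mathbb{R}$ by (H1), both $\psi$ and $\psi^{-1}$ are Borel-measurable, and because $\psi'$ is continuous and strictly positive, $\psi$ and $\psi^{-1}$ map Lebesgue-null sets to Lebesgue-null sets; combined with the measurability of $\omega$ from (H2), the function $\tau \mapsto \omega(\psi^{-1}(\tau))\,u(\psi^{-1}(\tau))$ is measurable whenever $u$ is. Linearity of $\mathcal{T}_{\psi,\omega}$ is immediate, being the composition of the linear operators $M_\omega$ and $Q_\psi^{-1}$.

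The isometry is the heart of the argument: for $u \in L^1_{\psi,\omega}(\mathbb{R})$, apply the substitution $\tau = \psi(t)$, $d\tau = \psi'(t)\,dt$ — legitimate precisely because $\psi$ is an increasing $C^1$-diffeomorphism of $\mathbb{R}$ onto itself by (H1) — to compute $\|\mathcal{T}_{\psi,\omega} u\|_{L^1(\mathbb{R})} = \int_{\mathbb{R}} |\omega(\psi^{-1}(\tau))|\,|u(\psi^{-1}(\tau))|\,d\tau = \int_{\mathbb{R}} |\omega(t)|\,|u(t)|\,\psi'(t)\,dt = \|u\|_{1,\psi,\omega}$. This single identity simultaneously shows that $\mathcal{T}_{\psi,\omega}$ maps $L^1_{\psi,\omega}(\mathbb{R})$ into $L^1(\mathbb{R})$ and is norm-preserving; injectivity then comes for free, since $\|\mathcal{T}_{\psi,\omega} u\|_{L^1} = 0$ forces $u = 0$ $\mu_{\psi,\omega}$-a.e., hence $u = 0$ in $L^1_{\psi,\omega}$. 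For surjectivity I would exhibit the inverse explicitly: given $v \in L^1(\mathbb{R})$, set $u := (M_\omega^{-1} \circ Q_\psi) v$, i.e.\ $u(t) = v(\psi(t))/\omega(t)$; this is measurable because $1/\omega$ is measurable (being locally bounded) by (H2) and $\omega > 0$ excludes division by zero, and the reverse change of variables $t = \psi^{-1}(\tau)$ gives $\|u\|_{1,\psi,\omega} = \|v\|_{L^1(\mathbb{R})} < \infty$, so $u \in L^1_{\psi,\omega}(\mathbb{R})$; a direct substitution then yields $\mathcal{T}_{\psi,\omega} u = v$. Hence $\mathcal{T}_{\psi,\omega}$ is a bijective linear isometry.

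The main obstacle — modest, in an otherwise routine verification — is the measure-theoretic bookkeeping: one must confirm that the change-of-variables formula applies to Lebesgue-measurable (not merely continuous or Borel) integrands, which rests on $\psi$ and $\psi^{-1}$ preserving null sets, a consequence of $\psi \in C^1$ with $\psi' > 0$; and one must track the distinction between "$\mu_{\psi,\omega}$-a.e." and "Lebesgue-a.e." equalities, which coincide here because the density $\omega\psi'$ is strictly positive and finite a.e. Local boundedness of $\omega$ and $1/\omega$ in (H2) is exactly what guarantees that these densities and their reciprocals are finite a.e., so that the multiplication operators $M_\omega^{\pm 1}$ act on all measurable functions without leaving the measurable category.
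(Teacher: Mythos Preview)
Your proof is correct and follows essentially the same route as the paper: the isometry is obtained via the change of variable $\tau = \psi(t)$, $d\tau = \psi'(t)\,dt$, and surjectivity is shown by exhibiting the explicit inverse $u(t) = \omega(t)^{-1} v(\psi(t))$. Your version is simply more careful with the measure-theoretic bookkeeping (null-set preservation, equivalence of the a.e.\ notions), which the paper treats as implicit.
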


\begin{proof}
Let $u \in L_{\psi,\omega}^{1}(\mathbb{R})$ and set $v = \mathcal{T}_{\psi,\omega} u$. We compute the standard $L^1$-norm of $v$:
\begin{equation}
    \|v\|_{L^1} = \int_{-\infty}^{\infty} |v(\tau)| d\tau = \int_{-\infty}^{\infty} |\omega(\psi^{-1}(\tau))u(\psi^{-1}(\tau))| d\tau.
\end{equation}
We perform the change of variable $\tau = \psi(t)$. By hypothesis \textbf{(H1)}, $\psi$ is a strictly increasing $C^1$-diffeomorphism mapping $\mathbb{R}$ onto $\mathbb{R}$, which implies $t = \psi^{-1}(\tau)$ and $d\tau = \psi'(t)dt$. Substituting into the integral:
\begin{equation}
    \|v\|_{L^1} = \int_{-\infty}^{\infty} |\omega(t)u(t)|\psi'(t)dt = \|u\|_{1,\psi,\omega}.
\end{equation}
Since the map is linear, preserves the norm, and is clearly bijective (with inverse given by $u(t) = \omega(t)^{-1}v(\psi(t))$), it constitutes an isometric isomorphism.
\end{proof}

\begin{theorem}[Existence and Weak Boundedness]
Let $0 < \alpha < 1$ and $u \in L_{\psi,\omega}^{1}(\mathbb{R})$. Then, the weighted Weyl integral $\mathfrak{I}_{\psi,\omega}^{\alpha}u(t)$ converges absolutely for almost every $t \in \mathbb{R}$.
\end{theorem}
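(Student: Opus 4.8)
The plan is to reduce this statement to the classical case through the conjugation identity underlying the definition, and then to establish the classical fact by a diagonal/tail splitting of the Weyl kernel. Concretely, since $\Igen = M_\omega^{-1}\circ Q_\psi\circ\Iclass\circ\mathcal{T}_{\psi,\omega}$ and, by Lemma~\ref{lemma:isometry}, $\mathcal{T}_{\psi,\omega}$ sends $L^1_{\psi,\omega}(\mathbb{R})$ isometrically onto $L^1(\mathbb{R})$, it suffices to prove the assertion for the ordinary left Weyl integral $\Iclass v$ acting on $v:=\mathcal{T}_{\psi,\omega}u\in L^1(\mathbb{R})$. Performing the substitution $\rho=\psi(\tau)$ in \eqref{eq:integral_def} gives the pointwise identity $\omega(t)\,\Igen u(t)=(\Iclass v)(\psi(t))$ for every $t$; since $\omega(t)\in(0,\infty)$ and $\psi$ is a $C^1$-diffeomorphism of $\mathbb{R}$ (hence absolutely continuous with $\psi'>0$, so it carries Lebesgue-null sets to null sets and back by \textbf{(H1)}), absolute convergence of $\Igen u$ at a point $t$ is equivalent to that of $\Iclass v$ at $\psi(t)$, and ``for almost every $t$'' translates faithfully into ``for almost every $\sigma=\psi(t)$''.

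It then remains to show that for $v\in L^1(\mathbb{R})$ the quantity $\int_{-\infty}^{\sigma}(\sigma-\rho)^{\alpha-1}|v(\rho)|\,d\rho$ is finite for almost every $\sigma$. I would split the kernel at distance one from the diagonal. For the tail region $\sigma-\rho\ge 1$ one has $(\sigma-\rho)^{\alpha-1}\le 1$ because $\alpha-1<0$, so that contribution is bounded by $\|v\|_{L^1}$ uniformly in $\sigma$; this is precisely the source of the (weak) boundedness asserted, and it also explains why no genuine $L^1\!\to\!L^1$ estimate can hold, the tail kernel not being integrable. For the near-diagonal region $0<\sigma-\rho<1$ the integral is the convolution $(h*|v|)(\sigma)$ with $h(s)=s^{\alpha-1}\mathbf{1}_{(0,1)}(s)$; since $\|h\|_{L^1}=\alpha^{-1}<\infty$, Tonelli's theorem yields $\|h*|v|\|_{L^1}\le\|h\|_{L^1}\|v\|_{L^1}<\infty$, whence $(h*|v|)(\sigma)<\infty$ for almost every $\sigma$. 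Adding the two contributions proves the claim for $\Iclass v$, and transporting it back through the change of variables of the first step yields absolute convergence of $\Igen u(t)$ for almost every $t$.

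The main obstacle --- modest, but the crux of the argument --- is precisely that the Weyl kernel $s^{\alpha-1}\mathbf{1}_{s>0}$ fails to lie in $L^1(\mathbb{R})$, so Young's convolution inequality is not available wholesale; the diagonal/tail decomposition is exactly the device that circumvents this, using integrability of the singular part and uniform boundedness of the tail part separately. The only other point demanding care is the bookkeeping of null sets under the substitution $\rho=\psi(\tau)$, which is harmless under \textbf{(H1)} since a strictly increasing $C^1$-diffeomorphism and its inverse are both absolutely continuous. If one prefers to argue directly on \eqref{eq:integral_def} without passing to the conjugate, the same splitting applies verbatim, now invoking \textbf{(H2)} so that $\omega(\tau)$ and $1/\omega(t)$ stay locally bounded and the localized pieces remain integrable; but the conjugation route is cleaner and isolates the one nontrivial idea.
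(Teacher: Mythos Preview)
Your proof is correct. Both your argument and the paper's begin by conjugating down to the classical Weyl integral $\Iclass v$ with $v=\mathcal{T}_{\psi,\omega}u\in L^1(\mathbb{R})$ via Lemma~\ref{lemma:isometry}, and then transport the a.e.\ conclusion back through $\psi$ and $\omega^{-1}$. The difference is in how the classical step is handled: the paper simply invokes the Hardy--Littlewood--Sobolev theorem, asserting that $\Iclass$ maps $L^1(\mathbb{R})$ into the weak space $L^{1/(1-\alpha),\infty}(\mathbb{R})$, from which a.e.\ finiteness follows at once. You instead give an elementary kernel splitting, writing $s^{\alpha-1}\mathbf{1}_{s>0}$ as a near-diagonal piece $s^{\alpha-1}\mathbf{1}_{(0,1)}\in L^1$ (handled by Young/Tonelli) plus a bounded tail $s^{\alpha-1}\mathbf{1}_{[1,\infty)}\le 1$ (handled by $\|v\|_{L^1}$). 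Your route is more self-contained and avoids quoting a nontrivial harmonic-analysis result; the paper's route, in exchange, delivers the quantitative weak-type bound that the theorem's title advertises but its statement does not actually claim. Your explicit treatment of null-set preservation under the $C^1$-diffeomorphism $\psi$ is also more careful than the paper's one-line remark.
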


\begin{proof}
By definition, $\mathfrak{I}_{\psi,\omega}^{\alpha}u = \mathcal{T}_{\psi,\omega}^{-1} (\mathcal{I}_{-\infty}^{\alpha} v)$, where $v = \mathcal{T}_{\psi,\omega} u \in L^{1}(\mathbb{R})$ is the isometric image of $u$. 
The classical Hardy-Littlewood-Sobolev theorem guarantees that the operator $\mathcal{I}_{-\infty}^{\alpha}$ is well-defined on $L^{1}(\mathbb{R})$ mapping into the weak space $L^{\frac{1}{1-\alpha},\infty}(\mathbb{R})$.
Since $\mathcal{T}_{\psi,\omega}^{-1}$ involves multiplication by $\omega^{-1}$ and composition with $Q_{\psi}$, which preserve pointwise finiteness almost everywhere (under hypothesis \textbf{(H2)}), the result follows.
\end{proof}

\subsubsection{The Weighted Weyl Derivative}

To define the derivative, we first identify the first-order differential operator acting as the infinitesimal generator of the weighted system. As introduced in \cite{FractalFract}, the associated first-order operator $\mathbb{D}_{\psi, \omega}^1$ is given by:
\begin{equation}
    \mathbb{D}_{\psi, \omega}^1 := \frac{1}{\omega(t)\psi'(t)} \frac{d}{dt} \left( \omega(t) \cdot \right).
\end{equation}

\begin{definition}[Weighted $\psi$-Weyl Derivative]
Let $\alpha \in (0,1)$. We define the Weighted Weyl Fractional Derivative $\Dgen$ as:
\begin{equation} \label{eq:derivative_def}
    \Dgen f(t) := \mathbb{D}_{\psi, \omega}^1 \left( \mathfrak{I}^{1-\alpha}_{\psi, \omega} f(t) \right).
\end{equation}
Explicitly:
\begin{equation}
    \Dgen f(t) = \frac{1}{\omega(t)\psi'(t)} \frac{d}{dt} \int_{-\infty}^{t} \frac{(\psi(t) - \psi(\tau))^{-\alpha}}{\Gamma(1-\alpha)} \omega(\tau) f(\tau) \psi'(\tau) d\tau.
\end{equation}
Equivalently, in terms of operator conjugation:
\begin{equation}\label{ec deriv operator}
    \mathfrak{D}_{\psi, \omega}^{\alpha} = M_{\omega}^{-1}\circ Q_{\psi}\circ D^\alpha\circ Q_{\psi}^{-1}\circ M_{\omega}.
\end{equation}
\end{definition}

It is easy to see that 
the operator $\Dgen$ is the left inverse of the integral operator $\Igen$ on the space of admissible functions:
\begin{equation}
    \Dgen \left( \Igen f \right)(t) = f(t), \quad \text{a.e.}
\end{equation}

\subsection{Hypersingular Integral Representation}

While the operator definition via conjugation is algebraically robust, practical applications often require a pointwise integral representation. We show that our operator coincides with a generalized Marchaud derivative.

\begin{proposition}[Equivalence with Weighted Marchaud Derivative]
Let $0 < \alpha < 1$. For functions $u$ belonging to the weighted Schwartz space $\mathcal{S}_{\psi,\omega}$ (as defined in \cite{DorregoSpectral}), the weighted Weyl derivative $\mathfrak{D}_{\psi, \omega}^{\alpha}$ admits the following hypersingular integral representation:
\begin{equation} \label{eq:marchaud_weighted}
    \mathfrak{D}_{\psi, \omega}^{\alpha} u(t) = \frac{c_{\alpha}}{\omega(t)} \int_{-\infty}^{t} \frac{\omega(t)u(t) - \omega(\tau)u(\tau)}{(\psi(t) - \psi(\tau))^{1+\alpha}} \psi'(\tau) \, d\tau,
\end{equation}
where $c_\alpha = \frac{\alpha}{\Gamma(1-\alpha)}$ is the normalization constant of the classical Weyl-Marchaud derivative.
\end{proposition}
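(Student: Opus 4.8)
The plan is to reduce the claim to the classical Weyl--Marchaud identity on $\mathbb{R}$ via the conjugation formula \eqref{ec deriv operator}. Recall that for a sufficiently regular function $v$ on $\mathbb{R}$ — in particular for $v \in \mathcal{S}(\mathbb{R})$ — the classical Weyl derivative of order $\alpha \in (0,1)$ coincides with its Marchaud form,
\begin{equation*}
    D^\alpha v(\sigma) = \frac{\alpha}{\Gamma(1-\alpha)} \int_{-\infty}^{\sigma} \frac{v(\sigma) - v(\eta)}{(\sigma - \eta)^{1+\alpha}} \, d\eta,
\end{equation*}
the integral converging absolutely near $\eta = \sigma$ because $v(\sigma) - v(\eta) = O(|\sigma - \eta|)$ and $\alpha < 1$; this is a standard fact, see Samko et al.\ \cite{Samko}. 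The weighted Schwartz space $\mathcal{S}_{\psi,\omega}$ is defined in \cite{DorregoSpectral} precisely so that the transmutation map $\mathcal{T}_{\psi,\omega} = Q_\psi^{-1}\circ M_\omega$ carries it isomorphically onto $\mathcal{S}(\mathbb{R})$; hence for $u \in \mathcal{S}_{\psi,\omega}$ the function $v := \mathcal{T}_{\psi,\omega} u$, given explicitly by $v(\sigma) = \omega(\psi^{-1}(\sigma))\, u(\psi^{-1}(\sigma))$, lies in $\mathcal{S}(\mathbb{R})$ and the displayed identity applies to it.

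Next I would evaluate the right-hand side of \eqref{ec deriv operator}. Since $Q_\psi^{-1}\circ M_\omega = \mathcal{T}_{\psi,\omega}$, we have $\mathfrak{D}_{\psi,\omega}^\alpha u = (M_\omega^{-1}\circ Q_\psi)\big(D^\alpha v\big)$, and because $Q_\psi$ is composition with $\psi$ and $M_\omega^{-1}$ is multiplication by $1/\omega$,
\begin{equation*}
    \mathfrak{D}_{\psi,\omega}^\alpha u(t) = \frac{1}{\omega(t)}\, \big(D^\alpha v\big)(\psi(t)) = \frac{\alpha}{\Gamma(1-\alpha)\,\omega(t)} \int_{-\infty}^{\psi(t)} \frac{v(\psi(t)) - v(\eta)}{(\psi(t) - \eta)^{1+\alpha}} \, d\eta.
\end{equation*}
Now I perform the change of variable $\eta = \psi(\tau)$. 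By hypothesis \textbf{(H1)}, $\psi$ is a $C^1$-diffeomorphism of $\mathbb{R}$, so $d\eta = \psi'(\tau)\,d\tau$, the lower limit $\eta \to -\infty$ corresponds to $\tau \to -\infty$, and $\eta = \psi(t)$ to $\tau = t$. Using $v(\psi(s)) = \omega(s)u(s)$ for every $s$, the numerator becomes $\omega(t)u(t) - \omega(\tau)u(\tau)$ and the denominator $(\psi(t) - \psi(\tau))^{1+\alpha}$, which yields exactly \eqref{eq:marchaud_weighted} with $c_\alpha = \alpha/\Gamma(1-\alpha)$.

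The one genuine point requiring care is the absolute convergence of the transformed integral near $\tau = t$, which should be verified in the weighted variables rather than quoted verbatim from the classical statement. Since $\omega u = v\circ\psi$ is $C^1$ on a neighbourhood of $t$ (as $v \in \mathcal{S}(\mathbb{R})$ and $\psi \in C^1$), we have $\omega(t)u(t) - \omega(\tau)u(\tau) = O(|t - \tau|) = O(\psi(t) - \psi(\tau))$ as $\tau \uparrow t$, using $\psi' > 0$; hence the integrand is $O\big((\psi(t) - \psi(\tau))^{-\alpha}\psi'(\tau)\big)$, which is integrable because $\alpha < 1$, while at the other end the Schwartz decay of $v$ controls the tail. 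I expect the main obstacle to be essentially bookkeeping: confirming that the definition of $\mathcal{S}_{\psi,\omega}$ imported from \cite{DorregoSpectral} does guarantee $v\in\mathcal{S}(\mathbb{R})$ (equivalently, that $\omega$ and $\psi$ interact with Schwartz decay as needed), after which the argument is a one-line change of variables. If one wished to bypass conjugation, the same formula could be derived directly from \eqref{eq:derivative_def} by differentiating under the integral sign and regularizing the boundary term at $\tau = t$, but this merely reproduces the classical computation in disguise and is less transparent.
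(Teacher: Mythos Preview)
Your proof is correct and follows essentially the same route as the paper's own argument: invoke the conjugation identity $\mathfrak{D}_{\psi,\omega}^{\alpha} = \mathcal{T}_{\psi,\omega}^{-1} D^{\alpha} \mathcal{T}_{\psi,\omega}$, apply the classical Marchaud representation to $v = \mathcal{T}_{\psi,\omega} u \in \mathcal{S}(\mathbb{R})$, evaluate at $\psi(t)$, and change variables $\eta = \psi(\tau)$. Your additional paragraph verifying absolute convergence near $\tau = t$ in the weighted variables is a welcome bit of rigour that the paper omits, but it does not change the structure of the argument.
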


\begin{proof}
The proof relies on the conjugation structure $\mathfrak{D}_{\psi, \omega}^{\alpha} = \mathcal{T}_{\psi,\omega}^{-1} D^{\alpha} \mathcal{T}_{\psi,\omega}$.
Let $u \in \mathcal{S}_{\psi,\omega}$. By definition, the auxiliary function $v(\xi) = (\mathcal{T}_{\psi,\omega} u)(\xi) = \omega(\psi^{-1}(\xi))u(\psi^{-1}(\xi))$ belongs to the classical Schwartz space $\mathcal{S}(\mathbb{R})$.

Recall that for $v \in \mathcal{S}(\mathbb{R})$, the Weyl derivative acts as the Marchaud hypersingular integral:
\begin{equation}
    (D^\alpha v)(\xi) = c_\alpha \int_{-\infty}^{\xi} \frac{v(\xi) - v(y)}{(\xi - y)^{1+\alpha}} \, dy.
\end{equation}
We evaluate this identity at $\xi = \psi(t)$ and perform the change of integration variable $y = \psi(\tau)$, which implies $dy = \psi'(\tau)d\tau$. Substituting $v(\psi(t)) = \omega(t)u(t)$ and $v(\psi(\tau)) = \omega(\tau)u(\tau)$, we obtain:
\begin{equation}
    (D^\alpha v)(\psi(t)) = c_\alpha \int_{-\infty}^{t} \frac{\omega(t)u(t) - \omega(\tau)u(\tau)}{(\psi(t) - \psi(\tau))^{1+\alpha}} \psi'(\tau) \, d\tau.
\end{equation}
Finally, applying the inverse map $\mathcal{T}_{\psi,\omega}^{-1}$ (which corresponds to multiplying by $\frac{1}{\omega(t)}$ in the time domain) yields the formula \eqref{eq:marchaud_weighted}.
\end{proof}

\begin{remark}[Physical Interpretation of the Singular Kernel]
The representation \eqref{eq:marchaud_weighted} offers significant physical insight distinct from the spectral view. Following the perspective of Fernandez et al. \cite{Fernandez2024}, this form explicitly displays the memory effect as a weighted accumulation of past increments.
The term $\omega(t)u(t) - \omega(\tau)u(\tau)$ captures the evolution of the ``weighted state'' relative to the current instant. Crucially, the singularity $(\psi(t) - \psi(\tau))^{-(1+\alpha)}$ indicates that recent events (in subjective time) have a dominant impact on the instantaneous rate of change, while the weight $\omega(\tau)$ modulates the persistence of remote history.
\end{remark}

% --- FIN DEL BLOQUE ---

\section{Spectral Analysis and The Weighted Fourier Transform}

In this section, we develop the spectral theory for the weighted Weyl operators. Our approach relies on the Weighted Fourier Transform with respect to a function, as rigorously developed by the authors in \cite{DorregoSpectral}. We adopt the symmetric (unitary) normalization to streamline the spectral identities.

\subsection{The Symmetric Weighted Fourier Transform}

Based on the conjugation structure established in Lemma \ref{lemma:isometry}, we define the Weighted Fourier Transform $\mathcal{F}_{\psi,\omega}$ as the unitary map that intertwines the weighted Hilbert space $L_{\psi,\omega}^{2}(\mathbb{R})$ with the standard frequency domain $L^{2}(\mathbb{R})$.

\begin{definition}[Symmetric Weighted Fourier Transform]
Let $f \in L_{\psi,\omega}^{1}(\mathbb{R}) \cap L_{\psi,\omega}^{2}(\mathbb{R})$. The symmetric Weighted Fourier Transform of $f$ is defined by:
\begin{equation} \label{eq:weighted_FT_def}
    \hat{f}_{\psi,\omega}(\xi) := \frac{1}{\sqrt{2\pi}} \int_{-\infty}^{\infty} f(t) e^{-i\xi\psi(t)} \omega(t)\psi'(t) \, dt, \quad \xi \in \mathbb{R}.
\end{equation}
\end{definition}

This integral transform projects the signal onto the ``subjective time'' eigenbasis. The corresponding inversion formula is given by:

\begin{theorem}[Inversion Formula]
For any $\hat{f}_{\psi,\omega} \in L^1(\mathbb{R}) \cap L^2(\mathbb{R})$, the original function can be recovered via:
\begin{equation}
    f(t) = \frac{1}{\omega(t)} \frac{1}{\sqrt{2\pi}} \int_{-\infty}^{\infty} \hat{f}_{\psi,\omega}(\xi) e^{i\xi\psi(t)} \, d\xi.
\end{equation}
\end{theorem}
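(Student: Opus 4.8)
The plan is to reduce the statement to the classical Fourier inversion theorem through the conjugation map $\mathcal{T}_{\psi,\omega}$ of Lemma~\ref{lemma:isometry}. The first step is to record the \emph{intertwining identity}
\[
    \hat{f}_{\psi,\omega} \;=\; \mathcal{F}\!\left[\mathcal{T}_{\psi,\omega} f\right],
\]
where $\mathcal{F}$ denotes the classical symmetric Fourier transform on $\mathbb{R}$. This follows directly from the definition~\eqref{eq:weighted_FT_def} via the change of variable $\tau = \psi(t)$: using $d\tau = \psi'(t)\,dt$ together with $(\mathcal{T}_{\psi,\omega} f)(\psi(t)) = \omega(t)f(t)$, the weight $\omega$ and the Jacobian $\psi'$ are absorbed exactly into the measure $d\mu_{\psi,\omega}$, so the weighted integral becomes the ordinary Fourier integral of $v := \mathcal{T}_{\psi,\omega} f$ evaluated at $\xi$. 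Hypothesis~\textbf{(H1)} guarantees that this substitution is a legitimate global diffeomorphic change of variable.

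Once the intertwining identity is in place, the inversion follows almost formally. Since, by definition of the transform, $f \in L^1_{\psi,\omega}(\mathbb{R})\cap L^2_{\psi,\omega}(\mathbb{R})$, Lemma~\ref{lemma:isometry} (and its $L^2$ analogue, obtained by the identical computation) gives $v = \mathcal{T}_{\psi,\omega} f \in L^1(\mathbb{R})\cap L^2(\mathbb{R})$; by the standing hypothesis of the theorem, $\hat v = \hat{f}_{\psi,\omega} \in L^1(\mathbb{R})\cap L^2(\mathbb{R})$ as well. The classical Fourier inversion theorem then yields
\[
    v(\tau) \;=\; \frac{1}{\sqrt{2\pi}}\int_{-\infty}^{\infty} \hat v(\xi)\, e^{i\xi\tau}\, d\xi
           \;=\; \frac{1}{\sqrt{2\pi}}\int_{-\infty}^{\infty} \hat{f}_{\psi,\omega}(\xi)\, e^{i\xi\tau}\, d\xi
\]
for almost every $\tau \in \mathbb{R}$. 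Evaluating at $\tau = \psi(t)$ — and noting that, because $\psi$ is a $C^1$-diffeomorphism, the exceptional null set is pulled back to a null set in the $t$-variable — we obtain that $\omega(t)f(t) = (\mathcal{T}_{\psi,\omega} f)(\psi(t))$ equals the right-hand side for a.e.\ $t$. Dividing by $\omega(t)$, which is strictly positive with $1/\omega \in L^\infty_{\mathrm{loc}}$ by~\textbf{(H2)}, produces the asserted formula.

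The argument is essentially bookkeeping, so the only genuinely delicate point is checking that all integrability prerequisites for the \emph{classical} inversion theorem are met and transported correctly across the conjugation: one needs $v \in L^1$ to give $\hat v$ a pointwise meaning, $\hat v \in L^1$ to give the inverse integral a pointwise meaning, and the diffeomorphism property to carry the ``almost everywhere'' qualifier between the $\tau$- and $t$-domains. I expect no serious obstacle beyond stating these hypotheses cleanly; should one wish to upgrade ``a.e.'' to ``everywhere,'' it suffices to remark that $\hat v \in L^1$ forces $v$ to coincide a.e.\ with a continuous function, after which the identity holds at every $t$ for that representative.
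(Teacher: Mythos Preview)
Your argument is correct and mirrors exactly the conjugation philosophy the paper employs throughout: the identity $\hat{f}_{\psi,\omega} = \mathcal{F}[\mathcal{T}_{\psi,\omega} f]$, classical Fourier inversion on $v = \mathcal{T}_{\psi,\omega} f$, and the pullback $\tau = \psi(t)$ followed by division by $\omega(t)$. The paper itself does not supply an explicit proof of this theorem---it is stated as a result imported from \cite{DorregoSpectral}---but your derivation is precisely the one the paper's framework dictates, and it is handled cleanly, including the a.e.\ bookkeeping under the diffeomorphism and the role of \textbf{(H2)} in the final division step.
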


\begin{remark}
Note the presence of the factor $1/\omega(t)$ in the inverse transform. This is structurally necessary to cancel the weight density $\omega(t)$ inherent in the forward integral measure, ensuring that $f(t)$ is recovered pointwise.
\end{remark}

A central result from \cite{DorregoSpectral} is that this symmetric definition yields a clean isometry between the spaces.

\begin{theorem}[Weighted Plancherel Identity]
The map $\mathcal{F}_{\psi,\omega}$ extends to a unitary isomorphism from $L_{\psi,\omega}^{2}(\mathbb{R})$ onto $L^{2}(\mathbb{R})$. In particular, for any $f, g \in L_{\psi,\omega}^{2}(\mathbb{R})$, the Parseval identity holds without normalization factors:
\begin{equation}
    \langle f, g \rangle_{\psi,\omega} = \langle \hat{f}_{\psi,\omega}, \hat{g}_{\psi,\omega} \rangle_{L^2(\mathbb{R})}.
\end{equation}
\end{theorem}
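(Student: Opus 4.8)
The plan is to reduce the statement to the classical Plancherel theorem via the conjugation structure already used throughout Section 2. The key observation is that $\mathcal{F}_{\psi,\omega}$ factors through the ordinary Fourier transform: performing the substitution $\tau=\psi(t)$ in \eqref{eq:weighted_FT_def} (legitimate by \textbf{(H1)}, which makes $\psi$ a $C^1$-diffeomorphism with $d\tau=\psi'(t)\,dt$) converts the integrand $f(t)e^{-i\xi\psi(t)}\omega(t)\psi'(t)\,dt$ into $\omega(\psi^{-1}(\tau))f(\psi^{-1}(\tau))\,e^{-i\xi\tau}\,d\tau$, so that
\[
    \hat f_{\psi,\omega}(\xi) = \frac{1}{\sqrt{2\pi}}\int_{-\infty}^{\infty} \bigl(\mathcal{T}_{\psi,\omega}f\bigr)(\tau)\,e^{-i\xi\tau}\,d\tau = \bigl(\mathcal{F}\circ\mathcal{T}_{\psi,\omega}\bigr)f(\xi),
\]
where $\mathcal{F}$ is the classical symmetric Fourier transform and $\mathcal{T}_{\psi,\omega}$ is the conjugation map of Lemma \ref{lemma:isometry}. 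Since, in the symmetric normalization, $\mathcal{F}$ is a unitary isomorphism of $L^2(\mathbb{R})$, the problem reduces to upgrading Lemma \ref{lemma:isometry} from the $L^1$ to the $L^2$ scale.

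First I would establish that $\mathcal{T}_{\psi,\omega}$ is a surjective isometry from $L^2_{\psi,\omega}(\mathbb{R})$ onto $L^2(\mathbb{R})$. This is the computation in the proof of Lemma \ref{lemma:isometry} with $|\cdot|$ replaced by $|\cdot|^2$: the same change of variable $\tau=\psi(t)$ yields $\|\mathcal{T}_{\psi,\omega}u\|_{L^2}^2 = \int_{-\infty}^{\infty}|u(t)|^2\,\omega(t)^2\,\psi'(t)\,dt$, which is exactly the squared norm of $L^2_{\psi,\omega}(\mathbb{R}) = L^2(\mathbb{R},\omega^2\psi'\,dt)$ in the normalization of \cite{DorregoSpectral}; linearity is clear and the inverse is $v\mapsto\omega^{-1}\,(v\circ\psi)$, so $\mathcal{T}_{\psi,\omega}$ is unitary, and by polarization $\langle f,g\rangle_{\psi,\omega}=\langle\mathcal{T}_{\psi,\omega}f,\mathcal{T}_{\psi,\omega}g\rangle_{L^2}$ for all $f,g\in L^2_{\psi,\omega}(\mathbb{R})$.

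Next comes the extension step. The explicit integral \eqref{eq:weighted_FT_def} is a priori defined only on $L^1_{\psi,\omega}\cap L^2_{\psi,\omega}$; but $\mathcal{T}_{\psi,\omega}$ carries this space bijectively onto $L^1(\mathbb{R})\cap L^2(\mathbb{R})$ (by the $L^1$ case of Lemma \ref{lemma:isometry} together with the $L^2$ isometry just proved), which is dense in $L^2(\mathbb{R})$, so $L^1_{\psi,\omega}\cap L^2_{\psi,\omega}$ is dense in $L^2_{\psi,\omega}(\mathbb{R})$. On this dense domain $\mathcal{F}_{\psi,\omega}=\mathcal{F}\circ\mathcal{T}_{\psi,\omega}$ is isometric into $L^2(\mathbb{R})$, hence extends uniquely to a bounded operator on all of $L^2_{\psi,\omega}(\mathbb{R})$ (bounded linear extension theorem), and the factorization persists globally by continuity. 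Being the composition of two surjective isometries, the extension is a unitary isomorphism onto $L^2(\mathbb{R})$. Chaining the three facts — the $L^2$-isometry of $\mathcal{T}_{\psi,\omega}$, classical Parseval for $\mathcal{F}$, and the factorization — then gives, for all $f,g\in L^2_{\psi,\omega}(\mathbb{R})$,
\[
    \langle f,g\rangle_{\psi,\omega} = \langle\mathcal{T}_{\psi,\omega}f,\mathcal{T}_{\psi,\omega}g\rangle_{L^2} = \langle\mathcal{F}\mathcal{T}_{\psi,\omega}f,\mathcal{F}\mathcal{T}_{\psi,\omega}g\rangle_{L^2} = \langle\hat f_{\psi,\omega},\hat g_{\psi,\omega}\rangle_{L^2},
\]
with no normalization constant, precisely because of the $1/\sqrt{2\pi}$ in \eqref{eq:weighted_FT_def}.

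The argument is largely bookkeeping once the factorization $\mathcal{F}_{\psi,\omega}=\mathcal{F}\circ\mathcal{T}_{\psi,\omega}$ is in hand; the only delicate point is the weight accounting. Because $\mathcal{T}_{\psi,\omega}$ multiplies by $\omega$ linearly (not by $\sqrt{\omega}$), the Hilbert space it trivializes to $L^2(\mathbb{R})$ is $L^2(\mathbb{R},\omega^2\psi'\,dt)$, so one must verify that the inner product $\langle\cdot,\cdot\rangle_{\psi,\omega}$ appearing on the left-hand side of the Parseval identity is exactly that one — any mismatch in the power of $\omega$ would reintroduce a density factor. No regularity or growth hypothesis beyond \textbf{(H1)}–\textbf{(H2)} is needed, since the proof never touches the hypersingular kernels of the fractional operators; it is purely a statement about the change of variables $\psi$ and the multiplier $\omega$.
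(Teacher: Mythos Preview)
The paper does not actually prove this theorem; it is imported verbatim from \cite{DorregoSpectral} with the prefatory remark ``A central result from \cite{DorregoSpectral} is that this symmetric definition yields a clean isometry between the spaces.'' So there is no in-paper proof to compare against. That said, your argument is correct and is exactly the one dictated by the paper's own architecture: the factorization $\mathcal{F}_{\psi,\omega}=\mathcal{F}\circ\mathcal{T}_{\psi,\omega}$ is precisely what the paper later exploits in the proof of the Spectral Mapping Theorem (Section~3.3), so your proof is the natural ``missing'' computation.

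Your flagging of the weight-power issue is the one genuinely nontrivial observation here. The paper only defines $L^1_{\psi,\omega}$ explicitly (with density $\omega\psi'$) and leaves $L^2_{\psi,\omega}$ and $\langle\cdot,\cdot\rangle_{\psi,\omega}$ undefined in the text, deferring to \cite{DorregoSpectral}. You are right that for $\mathcal{T}_{\psi,\omega}$ (which multiplies by $\omega$, not $\sqrt{\omega}$) to be an $L^2$-isometry, the inner product must carry density $\omega^2\psi'$, not $\omega\psi'$; otherwise the Parseval identity as stated would be off by a factor of $\omega$ inside the integral. This is a point the paper should have made explicit, and your proof would be strengthened by stating it as a standing convention rather than a parenthetical check.
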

\subsection{Weighted Convolution Structure}

To provide a compact representation of the inverse operators, we utilize the weighted convolution operation defined in \cite{DorregoSpectral}.

\begin{definition}[Weighted Convolution]
Let $f, g \in L^1_{\psi, \omega}(\mathbb{R})$. We define the weighted convolution $(f *_{\psi, \omega} g)$ as:
\begin{equation} \label{eq:weighted_conv_def}
    (f *_{\psi, \omega} g)(t) := \frac{1}{\omega(t)} \int_{-\infty}^{\infty} f\left(\psi^{-1}(\psi(t) - \psi(\tau))\right) \, g(\tau) \, \omega(\tau) \, \psi'(\tau) \, d\tau.
\end{equation}
\end{definition}

This operation is structurally isomorphic to the standard convolution on $\mathbb{R}$. A key property derived in \cite{DorregoSpectral} is the \textbf{Convolution Theorem}, which states that the Weighted Fourier Transform maps this operation into a pointwise product:
\begin{equation}
    \mathcal{F}_{\psi, \omega}(f *_{\psi, \omega} g)(\xi) =  \hat{f}_{\psi, \omega}(\xi) \cdot \hat{g}_{\psi, \omega}(\xi).
\end{equation}

\subsection{Spectral Mapping Theorem}

Following the philosophy outlined by Stinga \cite{Stinga}, we privilege the spectral definition of the fractional operator. Stinga emphasized that while the singular integral definition is historically significant, the spectral characterization (multiplication by a power of the frequency) is often more powerful for analysis. The following theorem confirms that this equivalence holds even in our generalized weighted setting, provided the correct transform $\Fgen$ is employed.

We now state the main spectral result: the weighted Weyl fractional derivative $\mathfrak{D}_{\psi,\omega}^{\alpha}$ is diagonalized by this transform.

\begin{theorem}[Diagonalization of the Operator]
Let $\alpha > 0$. For any function $u$ in the domain of the operator, the Weighted Fourier Transform of the weighted Weyl derivative satisfies:
\begin{equation}
    \Fgen \left\{ \Dgen u \right\}(\xi) = (i\xi)^\alpha \Fgen \{ u \}(\xi),
\end{equation}
Consequently, the operator $\mathfrak{D}_{\psi,\omega}^{\alpha}$ is unitarily equivalent to the multiplication operator $M_{(i\xi)^\alpha}$ on $L^2(\mathbb{R})$.
\end{theorem}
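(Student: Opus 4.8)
The plan is to reduce the statement to the classical Weyl symbol by exploiting the fact that \emph{both} the operator and the transform factor through the same intertwining map $\mathcal{T}_{\psi,\omega}$ of Lemma~\ref{lemma:isometry}.

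First I would record the factorization of the transform itself. Performing the substitution $\tau=\psi(t)$ in \eqref{eq:weighted_FT_def} (a bijective $C^1$ change of variable on all of $\mathbb{R}$ by \textbf{(H1)}, with $d\tau=\psi'(t)\,dt$) and recognizing $\omega(\psi^{-1}(\tau))f(\psi^{-1}(\tau))=(\mathcal{T}_{\psi,\omega}f)(\tau)$, one obtains the operator identity $\mathcal{F}_{\psi,\omega}=\mathcal{F}\circ\mathcal{T}_{\psi,\omega}$, where $\mathcal{F}$ is the standard unitary Fourier transform on $L^2(\mathbb{R})$ with the convention $\hat v(\xi)=\frac{1}{\sqrt{2\pi}}\int v(\tau)e^{-i\xi\tau}\,d\tau$. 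This is the $L^2$-analogue of Lemma~\ref{lemma:isometry} and is consistent with the Plancherel theorem, since it exhibits $\mathcal{F}_{\psi,\omega}$ as a composition of unitaries $L^2_{\psi,\omega}(\mathbb{R})\xrightarrow{\mathcal{T}_{\psi,\omega}}L^2(\mathbb{R})\xrightarrow{\mathcal{F}}L^2(\mathbb{R})$. Next I would invoke the conjugation identity \eqref{ec deriv operator}, written as $\mathfrak{D}_{\psi,\omega}^{\alpha}=\mathcal{T}_{\psi,\omega}^{-1}\circ D^{\alpha}\circ\mathcal{T}_{\psi,\omega}$ with $D^\alpha$ the classical Weyl derivative. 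Composing, for $u$ in the domain of $\mathfrak{D}_{\psi,\omega}^{\alpha}$,
\[
\mathcal{F}_{\psi,\omega}\bigl\{\mathfrak{D}_{\psi,\omega}^{\alpha}u\bigr\}
=\mathcal{F}\bigl[\mathcal{T}_{\psi,\omega}\,\mathcal{T}_{\psi,\omega}^{-1} D^{\alpha}\mathcal{T}_{\psi,\omega}\,u\bigr]
=\mathcal{F}\bigl[D^{\alpha}(\mathcal{T}_{\psi,\omega}u)\bigr].
\]
Applying the classical spectral property of the left-sided Weyl derivative, $\mathcal{F}[D^{\alpha}v](\xi)=(i\xi)^{\alpha}\mathcal{F}[v](\xi)$ with the principal branch $(i\xi)^{\alpha}=|\xi|^{\alpha}e^{i\frac{\pi\alpha}{2}\operatorname{sgn}\xi}$ (valid on the Schwartz class and extended by continuity to the natural Sobolev-type domain) to $v=\mathcal{T}_{\psi,\omega}u$, we get $(i\xi)^{\alpha}\mathcal{F}[\mathcal{T}_{\psi,\omega}u](\xi)=(i\xi)^{\alpha}\mathcal{F}_{\psi,\omega}\{u\}(\xi)$, which is the claimed identity. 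The ``consequently'' clause is then immediate: since $\mathcal{F}_{\psi,\omega}\colon L^2_{\psi,\omega}(\mathbb{R})\to L^2(\mathbb{R})$ is unitary and onto, the identity rearranges to $\mathcal{F}_{\psi,\omega}\,\mathfrak{D}_{\psi,\omega}^{\alpha}\,\mathcal{F}_{\psi,\omega}^{-1}=M_{(i\xi)^{\alpha}}$.

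The main obstacle is not the algebra but the bookkeeping of domains: one must verify that $\mathcal{T}_{\psi,\omega}$ carries the natural domain of $\mathfrak{D}_{\psi,\omega}^{\alpha}$ (those $u$ for which $\mathfrak{I}^{1-\alpha}_{\psi,\omega}u$ is weighted-absolutely continuous with $\mathbb{D}^1_{\psi,\omega}\mathfrak{I}^{1-\alpha}_{\psi,\omega}u\in L^2_{\psi,\omega}$) precisely onto the domain of the classical $D^{\alpha}$, i.e.\ $\mathcal{F}^{-1}\{g:(i\xi)^{\alpha}g\in L^2\}$. Because \textbf{(H2)} only grants $\omega,\omega^{-1}\in L^\infty_{\mathrm{loc}}$, the multiplications $M_\omega,M_\omega^{-1}$ preserve measurability and local integrability but not global membership, so the cleanest route is to first establish the identity on the dense subspace $\mathcal{S}_{\psi,\omega}$ — where the hypersingular representation \eqref{eq:marchaud_weighted} makes every manipulation licit and $\mathcal{T}_{\psi,\omega}\mathcal{S}_{\psi,\omega}=\mathcal{S}(\mathbb{R})$ — and then pass to the full domain using the continuity of the unitary maps and the closedness of the multiplication operator $M_{(i\xi)^{\alpha}}$. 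A secondary point worth stating explicitly is the consistent choice of branch for $(i\xi)^{\alpha}$ relative to the Fourier convention in \eqref{eq:weighted_FT_def}; this can be cross-checked against $\mathfrak{D}_{\psi,\omega}^{\alpha}=\mathbb{D}^1_{\psi,\omega}\mathfrak{I}^{1-\alpha}_{\psi,\omega}$ together with the first-order symbol $\mathcal{F}_{\psi,\omega}\{\mathbb{D}^1_{\psi,\omega}u\}(\xi)=i\xi\,\mathcal{F}_{\psi,\omega}\{u\}(\xi)$ and the integral symbol $\mathcal{F}_{\psi,\omega}\{\mathfrak{I}^{\beta}_{\psi,\omega}u\}(\xi)=(i\xi)^{-\beta}\mathcal{F}_{\psi,\omega}\{u\}(\xi)$, which multiply to $(i\xi)^{\alpha}$ as required.
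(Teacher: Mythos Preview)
Your argument is correct and follows essentially the same route as the paper: both factor $\mathcal{F}_{\psi,\omega}=\mathcal{F}\circ\mathcal{T}_{\psi,\omega}$, invoke the conjugation identity \eqref{ec deriv operator}, and reduce to the classical Weyl symbol $(i\xi)^{\alpha}$. Your version is in fact more careful than the paper's, which leaves the domain bookkeeping and branch consistency implicit; your discussion of establishing the identity first on $\mathcal{S}_{\psi,\omega}$ and extending by closedness of $M_{(i\xi)^{\alpha}}$ fills a gap the paper glosses over.
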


\begin{proof}
The proof follows from the conjugation property. By definition, $\mathfrak{D}_{\psi,\omega}^{\alpha} = \mathcal{T}_{\psi,\omega}^{-1} D^{\alpha} \mathcal{T}_{\psi,\omega}$. Applying the symmetric transform (which equates to $\mathcal{F}_{sym} \circ \mathcal{T}_{\psi,\omega}$), and using the property that the standard symmetric Fourier transform diagonalizes the Weyl derivative, we obtain:
\begin{equation}
    \mathcal{F}_{\psi,\omega} (\mathfrak{D}_{\psi,\omega}^{\alpha} u)(\xi) = (i\xi)^\alpha \hat{u}_{\psi,\omega}(\xi).
\end{equation}
This confirms that the functions $e_{\xi}(t) = \frac{1}{\omega(t)} e^{i\xi\psi(t)}$ constitute the continuous spectrum eigenfunctions of the operator.
\end{proof}

\section{Application: Aging Viscoelasticity and Infinite Memory}

We utilize the developed spectral framework to analyze a generalized model for \textit{aging viscoelastic materials} (e.g., curing concrete or degrading biological tissues). In such media, the material properties evolve with time, and the ``memory'' depends on the subjective age difference $\psi(t) - \psi(\tau)$ rather than the absolute time elapsed $t-\tau$. Moreover, to capture the complete history of deformations (potentially starting in the remote past), we pose the problem on the entire real line $\mathbb{R}$.

We consider a viscoelastic material whose mechanical properties evolve over time due to physicochemical aging (e.g., curing concrete or hardening polymers). The classical fractional Kelvin-Voigt model, extensively analyzed by Mainardi \cite{Mainardi1996}, describes materials with static memory. To incorporate aging, recent studies have proposed models with time-dependent coefficients \cite{DiPaola2013}.

Following this perspective, we formulate the stress-strain relationship using the weighted fractional operator $\mathfrak{D}_{\psi, \omega}^{\alpha}$, which naturally captures the evolution of the material's internal time scale. The constitutive equation is governed by:
\begin{equation} \label{eq:viscoelastic_model}
    \mathfrak{D}_{\psi, \omega}^{\alpha} \sigma(t) + \lambda \sigma(t) = f(t), \quad t \in \mathbb{R},
\end{equation}
where $\sigma(t)$ represents the stress, $f(t)$ is the forcing term (related to strain rate), and $\lambda > 0$ is a material parameter.

\subsection{Explicit Solution via Weighted Green's Function}

The spectral isomorphism allows us to identify the fundamental solution of the operator. We define the \textit{Weighted Green's Function} for the aging Kelvin-Voigt model as:

\begin{definition}[Aging Impulse Response]
The fundamental kernel $\mathcal{G}_{\psi,\omega}^{\alpha, \lambda}(t)$ is defined as the inverse weighted Fourier transform of the spectral symbol $( (i\xi)^\alpha + \lambda )^{-1}$. Explicitly:
\begin{equation}
    \mathcal{G}_{\psi,\omega}^{\alpha, \lambda}(t) := \frac{1}{\omega(t)} (\psi(t))^{\alpha-1} E_{\alpha, \alpha}\left(-\lambda (\psi(t))^\alpha\right) \cdot \Theta(\psi(t)),
\end{equation}
where $\Theta(\cdot)$ is the Heaviside step function, ensuring causality in the subjective time domain.
\end{definition}

\begin{theorem}[Response Structure] \label{thm:visco_response}
Let $f \in L_{\psi,\omega}^{1}(\mathbb{R})$ be the forcing history. The unique causal solution $\sigma(t)$ to the fractional evolution equation \eqref{eq:viscoelastic_model} is given by the weighted convolution of the forcing term with the Green's function:
\begin{equation} \label{eq:exact_sol_conv}
    \sigma(t) = (\mathcal{G}_{\psi,\omega}^{\alpha, \lambda} *_{\psi, \omega} f)(t).
\end{equation}
\end{theorem}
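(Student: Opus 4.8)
The plan is to diagonalize the evolution equation \eqref{eq:viscoelastic_model} with the Weighted Fourier Transform $\Fgen$, solve the resulting algebraic equation, and then recognize the solution as a weighted convolution. First I would apply $\Fgen$ to both sides of \eqref{eq:viscoelastic_model}. By the Diagonalization of the Operator, $\Fgen\{\Dgen\sigma\}(\xi) = (i\xi)^\alpha\,\hat\sigma_{\psi,\omega}(\xi)$, so linearity collapses the equation to the purely algebraic relation $\big((i\xi)^\alpha + \lambda\big)\,\hat\sigma_{\psi,\omega}(\xi) = \hat f_{\psi,\omega}(\xi)$ for a.e. $\xi \in \mathbb{R}$.

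The key structural observation is that the spectral symbol $P(\xi) := (i\xi)^\alpha + \lambda$ never vanishes. With the principal branch one has $(i\xi)^\alpha = |\xi|^\alpha e^{\,i\,\mathrm{sgn}(\xi)\,\alpha\pi/2}$, hence $\operatorname{Re} P(\xi) = |\xi|^\alpha \cos(\alpha\pi/2) + \lambda \ge \lambda > 0$ because $\alpha \in (0,1)$ forces $\cos(\alpha\pi/2) > 0$. Therefore $1/P$ is a bounded continuous function with $\|1/P\|_{\infty} \le 1/\lambda$, so division is legitimate and $\hat\sigma_{\psi,\omega}(\xi) = \hat f_{\psi,\omega}(\xi)/P(\xi)$ is the unique candidate for the transform of any solution — which already gives uniqueness in the causal class, since a causal solution is determined by its weighted transform and the homogeneous problem $D^\alpha h + \lambda h = 0$ admits no nonzero causal tempered solution. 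Next, by the Definition of the Aging Impulse Response we have $\Fgen\{\mathcal{G}_{\psi,\omega}^{\alpha,\lambda}\}(\xi) = 1/P(\xi)$; this rests on the classical Laplace/Fourier identity that the one-sided function $x^{\alpha-1}E_{\alpha,\alpha}(-\lambda x^\alpha)\Theta(x)$ has ordinary transform $((i\xi)^\alpha+\lambda)^{-1}$, transported to the weighted setting through $\Fgen = \mathcal{F}_{\mathrm{sym}}\circ\mathcal{T}_{\psi,\omega}$ together with the explicit form of $\mathcal{T}_{\psi,\omega}^{-1}$ (multiplication by $1/\omega$ and composition with $\psi$). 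Combining, $\hat\sigma_{\psi,\omega} = \Fgen\{\mathcal{G}_{\psi,\omega}^{\alpha,\lambda}\}\cdot\hat f_{\psi,\omega}$, and the weighted Convolution Theorem rewrites the product as $\Fgen\{\mathcal{G}_{\psi,\omega}^{\alpha,\lambda} *_{\psi,\omega} f\}$. Injectivity of $\Fgen$ from the Weighted Plancherel Identity then yields $\sigma = \mathcal{G}_{\psi,\omega}^{\alpha,\lambda} *_{\psi,\omega} f$ a.e., which is exactly \eqref{eq:exact_sol_conv}; causality is inherited because $\mathcal{G}_{\psi,\omega}^{\alpha,\lambda}$ is supported in $\{t : \psi(t) \ge 0\}$ and the weighted convolution integrates only the subjective-time past.

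The main obstacle is functional-analytic bookkeeping rather than algebra: when $\alpha \le 1/2$ the multiplier $1/P(\xi) \sim (i\xi)^{-\alpha}$ fails to be square-integrable at infinity, so $\mathcal{G}_{\psi,\omega}^{\alpha,\lambda}$ must be treated as a locally integrable (tempered) kernel rather than an $L^2$ function, and the passage "transform $\leftrightarrow$ convolution" needs to be justified outside the purely Hilbert-space regime. I would circumvent this by reducing everything to the classical line via the conjugation identity $\Dgen = \mathcal{T}_{\psi,\omega}^{-1} D^\alpha \mathcal{T}_{\psi,\omega}$: set $g_{\alpha,\lambda}(x) = x^{\alpha-1}E_{\alpha,\alpha}(-\lambda x^\alpha)\Theta(x)$, invoke the known fact that $g_{\alpha,\lambda}$ is the fundamental solution of $D^\alpha + \lambda$ on $\mathbb{R}$ (so $(D^\alpha + \lambda)(g_{\alpha,\lambda} * v) = v$ for admissible $v$), and push this identity through $\mathcal{T}_{\psi,\omega}$, using that $\mathcal{T}_{\psi,\omega}$ intertwines $*_{\psi,\omega}$ with ordinary convolution. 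The remaining technical checks are that $\mathcal{G}_{\psi,\omega}^{\alpha,\lambda} *_{\psi,\omega} f$ genuinely lies in the domain of $\Dgen$ for $f \in L^1_{\psi,\omega}(\mathbb{R})$, and that the approximation/density argument needed to make the spectral manipulations rigorous is consistent with the pointwise convolution representation.
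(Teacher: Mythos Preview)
Your proposal is correct and follows essentially the same route as the paper: apply $\Fgen$, use the diagonalization theorem to reduce \eqref{eq:viscoelastic_model} to an algebraic equation, divide by the symbol $(i\xi)^\alpha+\lambda$, and invert via the weighted Convolution Theorem. In fact you are more careful than the paper, which does not verify non-vanishing of the symbol, does not argue uniqueness, and does not raise the integrability issue for $\mathcal{G}_{\psi,\omega}^{\alpha,\lambda}$ that you flag and propose to handle by conjugation.
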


\begin{proof}
Applying the Weighted Fourier Transform to \eqref{eq:viscoelastic_model} and using the diagonalization property (Theorem 3.5), we obtain the algebraic equation in the frequency domain:
\begin{equation}
    (i\xi)^\alpha \hat{\sigma}_{\psi,\omega}(\xi) + \lambda \hat{\sigma}_{\psi,\omega}(\xi) = \hat{f}_{\psi,\omega}(\xi).
\end{equation}
Solving for $\hat{\sigma}_{\psi,\omega}$:
\begin{equation}
    \hat{\sigma}_{\psi,\omega}(\xi) = \frac{1}{(i\xi)^\alpha + \lambda} \hat{f}_{\psi,\omega}(\xi) = \widehat{(\mathcal{G}_{\psi,\omega}^{\alpha, \lambda})}_{\psi,\omega}(\xi) \cdot \hat{f}_{\psi,\omega}(\xi).
\end{equation}
By the Convolution Theorem stated in Section 3, the inverse transform yields precisely $\sigma = \mathcal{G}_{\psi,\omega}^{\alpha, \lambda} *_{\psi, \omega} f$.
Expanding the convolution definition \eqref{eq:weighted_conv_def} with the kernel $\mathcal{G}_{\psi,\omega}^{\alpha, \lambda}$, we recover the explicit integral representation:
\begin{equation}
    \sigma(t) = \frac{1}{\omega(t)} \int_{-\infty}^{t} (\psi(t)-\psi(\tau))^{\alpha-1} E_{\alpha, \alpha}\left(-\lambda [\psi(t) - \psi(\tau)]^\alpha\right) f(\tau) \omega(\tau) \psi'(\tau) \, d\tau.
\end{equation}
This confirms the consistency of the spectral approach.
\end{proof}

\begin{remark}[Generalization of Standard Models]
The solution \eqref{eq:exact_sol_conv} unifies previous results found in the literature. 
\begin{itemize}
    \item By setting $\psi(t)=t$ and $\omega(t)=1$, we recover the classical stationary solution characterized by the Mittag-Leffler kernel, as detailed in \cite{Mainardi1996}.
    \item Furthermore, for specific choices of the aging scale $\psi(t)$, our formulation provides the exact analytical counterpart to the phenomenological aging models often treated numerically in engineering contexts \cite{DiPaola2013}. Our spectral approach essentially proves that these aging models are structurally isomorphic to stationary fractional systems under a time-warping transformation.
\end{itemize}
\end{remark}
\begin{remark}[Physical Intractability and Methodological Necessity] \label{rem:intractability}
It is crucial to emphasize that, although the weighted operator $\mathfrak{D}_{\psi, \omega}^{\alpha}$ is algebraically isomorphic to a standard derivative, the physical problem in the laboratory frame \eqref{eq:viscoelastic_model} is generally \textbf{intractable} via standard integral transforms.

The classical Laplace transform fails because the aging weights $\omega(t)$ and scale $\psi(t)$ destroy the \textit{time-translation invariance} of the system (i.e., the response depends on absolute time $t$, not just the elapsed interval $t-\tau$). Consequently, standard transforms would map the differential equation into a complex convolution of operators rather than a simple algebraic equation.

The core value of our framework lies in identifying the specific transformation that matches the physics of aging. The Weighted Fourier Transform $\mathcal{F}_{\psi, \omega}$ succeeds where classical methods fail because it is constructed to respect the underlying ``subjective time'' geometry. It effectively ``untwists'' the time-varying metric, mapping the non-autonomous physical process into a stationary spectral domain where the problem can be solved exactly.
\end{remark}
\subsection{The Amnesia Phenomenon}

The convolution structure derived in Theorem \ref{thm:visco_response} reveals that the memory of the system is governed by the \textit{effective kernel}:
\begin{equation}
    \mathcal{K}(t, \tau) := (\psi(t) - \psi(\tau))^{\alpha-1} E_{\alpha, \alpha}\left(-\lambda [\psi(t) - \psi(\tau)]^\alpha\right).
\end{equation}
This kernel corresponds precisely to the Weighted Green's function evaluated at the subjective time lag $\Delta_\psi(t,\tau) = \psi(t) - \psi(\tau)$. This representation allows us to prove the following asymptotic result.

\begin{corollary}[Modulation of Memory Decay]\label{cor:amnesia_decay}
The asymptotic behavior of the material's memory is strictly determined by the growth rate of the scale function $\psi(t)$. Specifically, as $t \to \infty$:

\begin{enumerate}
    \item \textbf{Case I: Standard Metric ($\psi(t) \sim t$).} If the time scale is linear, the kernel exhibits a heavy-tailed power-law decay:
    \begin{equation}
        \mathcal{K}(t, 0) = O(t^{-(1+\alpha)}).
    \end{equation}
    
    \item \textbf{Case II: Exponential Aging ($\psi(t) \sim e^{\gamma t}$).} If the material ages rapidly ($\gamma > 0$), the effective memory decays exponentially fast:
    \begin{equation}
        \mathcal{K}(t, 0) = O(e^{-(1+\alpha)\gamma t}).
    \end{equation}
\end{enumerate}

\noindent \textbf{Conclusion:} Rapid aging mathematically induces an ``Amnesia Phenomenon,'' effectively truncating the infinite memory of the fractional operator.
\end{corollary}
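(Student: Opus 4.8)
The plan is to reduce the statement to the classical algebraic asymptotics of the two–parameter Mittag–Leffler function and then to compose with the scale function $\psi$. First I would note that $\mathcal{K}(t,0)$ depends on $t$ only through the subjective lag $s := \psi(t) - \psi(0)$, namely $\mathcal{K}(t,0) = g(s)$ where $g(s) := s^{\alpha-1}\,E_{\alpha,\alpha}\!\left(-\lambda s^{\alpha}\right)$. By hypothesis \textbf{(H1)} the map $\psi$ is increasing with $\psi(t)\to+\infty$ as $t\to+\infty$, so $s\to+\infty$; it therefore suffices to obtain the decay rate of $g(s)$ as $s\to\infty$ and then substitute the prescribed growth of $\psi$.

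The key step is the asymptotic expansion of $E_{\alpha,\alpha}$ along the negative real axis. For $0<\alpha<1$ the argument $z=-\lambda s^{\alpha}$ has $\arg z=\pi$, which lies in the sector $\mu\le|\arg z|\le\pi$ (with any $\mu\in(\pi\alpha/2,\pi\alpha)$) where the standard expansion
\[ E_{\alpha,\alpha}(z) \;=\; -\sum_{k=1}^{N}\frac{z^{-k}}{\Gamma(\alpha-\alpha k)} \;+\; O\!\left(|z|^{-N-1}\right), \qquad |z|\to\infty, \]
is valid (cf. \cite{Mainardi1996}). The decisive observation is that the ostensibly leading term, $k=1$, carries the coefficient $1/\Gamma(\alpha-\alpha)=1/\Gamma(0)=0$ and thus vanishes identically. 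Hence the genuine leading behavior is furnished by $k=2$, and since $z^{-2}=\lambda^{-2}s^{-2\alpha}$,
\[ E_{\alpha,\alpha}\!\left(-\lambda s^{\alpha}\right) \;=\; -\frac{1}{\Gamma(-\alpha)\,\lambda^{2}}\,s^{-2\alpha} \;+\; O\!\left(s^{-3\alpha}\right), \qquad s\to\infty, \]
with $\Gamma(-\alpha)\ne 0$ for $\alpha\in(0,1)$. Multiplying by the prefactor $s^{\alpha-1}$ gives $g(s)= -\frac{1}{\Gamma(-\alpha)\lambda^{2}}\,s^{-(1+\alpha)} + O\!\left(s^{-(1+2\alpha)}\right)$, so in particular $g(s)=O\!\left(s^{-(1+\alpha)}\right)$ (indeed the rate is sharp).

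It then remains to transport this estimate through $\psi$. Writing $s=\psi(t)-\psi(0)=\psi(t)\bigl(1-\psi(0)/\psi(t)\bigr)$ and using $\psi(t)\to\infty$ we obtain $s\sim\psi(t)$, whence $\mathcal{K}(t,0)=O\!\left((\psi(t))^{-(1+\alpha)}\right)$. Specializing the growth of $\psi$: in Case~I, $\psi(t)\sim t$ gives $\mathcal{K}(t,0)=O\!\left(t^{-(1+\alpha)}\right)$, the heavy power-law tail; in Case~II, $\psi(t)\sim e^{\gamma t}$ with $\gamma>0$ gives $\mathcal{K}(t,0)=O\!\left(e^{-(1+\alpha)\gamma t}\right)$, an exponential cutoff — the Amnesia Phenomenon.

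I expect the principal obstacle to be exactly the bookkeeping around the vanishing $k=1$ term: a mechanical reading of the Mittag–Leffler asymptotics predicts $E_{\alpha,\alpha}(-x)\sim c\,x^{-1}$ and hence the \emph{incorrect} rate $g(s)=O(s^{\alpha-2})$; one must verify $c=0$ and legitimately descend to the next order, which requires that the remainder bound in the expansion be uniform along the whole negative axis — this is ensured because $z=-\lambda s^{\alpha}$ never leaves the admissible sector. The remaining ingredients — the identification $\mathcal{K}(t,0)=g(\psi(t)-\psi(0))$, the replacement $\psi(t)-\psi(0)\sim\psi(t)$, and the final substitutions — are routine consequences of \textbf{(H1)}.
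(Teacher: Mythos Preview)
Your proposal is correct and follows essentially the same route as the paper: invoke the large-argument asymptotic expansion of $E_{\alpha,\alpha}(-z)$, observe that the $k=1$ coefficient $1/\Gamma(0)$ vanishes so that the true leading order is $z^{-2}$, multiply by the prefactor to obtain $\mathcal{K}(t,0)\sim C\,(\psi(t))^{-(1+\alpha)}$, and then specialize $\psi$. If anything you are slightly more careful than the paper's own argument, since you justify the sector condition for the expansion, verify $\Gamma(-\alpha)\neq 0$, and handle the shift $\psi(t)-\psi(0)\sim\psi(t)$ explicitly rather than tacitly setting $\psi(0)=0$.
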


\begin{proof}
We analyze the asymptotic expansion of the two-parameter Mittag-Leffler function $E_{\alpha, \beta}(-z)$ as $|z| \to \infty$. The standard expansion is given by:
\begin{equation}
    E_{\alpha, \beta}(-z) = -\sum_{k=1}^{N} \frac{1}{\Gamma(\beta - \alpha k)} z^{-k} + O(z^{-N-1}).
\end{equation}
In our case, the kernel involves $\beta = \alpha$. Notably, the first term ($k=1$) vanishes because the coefficient involves $1/\Gamma(\alpha - \alpha) = 1/\Gamma(0) = 0$. Thus, the leading term is governed by $k=2$:
\begin{equation}
    E_{\alpha, \alpha}(-z) \sim \frac{-1}{\Gamma(-\alpha)} z^{-2}, \quad \text{as } z \to \infty.
\end{equation}
Substituting $z = \lambda (\psi(t))^\alpha$ into the definition of $\mathcal{K}(t,0)$:
\begin{align}
    \mathcal{K}(t, 0) &= (\psi(t))^{\alpha-1} E_{\alpha, \alpha}(-\lambda \psi(t)^\alpha) \\
    &\sim (\psi(t))^{\alpha-1} \cdot C (\psi(t)^\alpha)^{-2} \\
    &= C (\psi(t))^{\alpha - 1 - 2\alpha} = C (\psi(t))^{-(1+\alpha)}.
\end{align}
The result follows immediately by substituting $\psi(t) = t$ for the polynomial case and $\psi(t) = e^{\gamma t}$ for the exponential case.
\end{proof}
\noindent \textbf{Conclusion:} This result analytically demonstrates the ``Amnesia Phenomenon'': rapid aging compresses the memory kernel, effectively transforming the long-range fractional history into a short-range exponential relaxation.

\begin{figure}[H]
    \centering
    \includegraphics[width=0.8\textwidth]{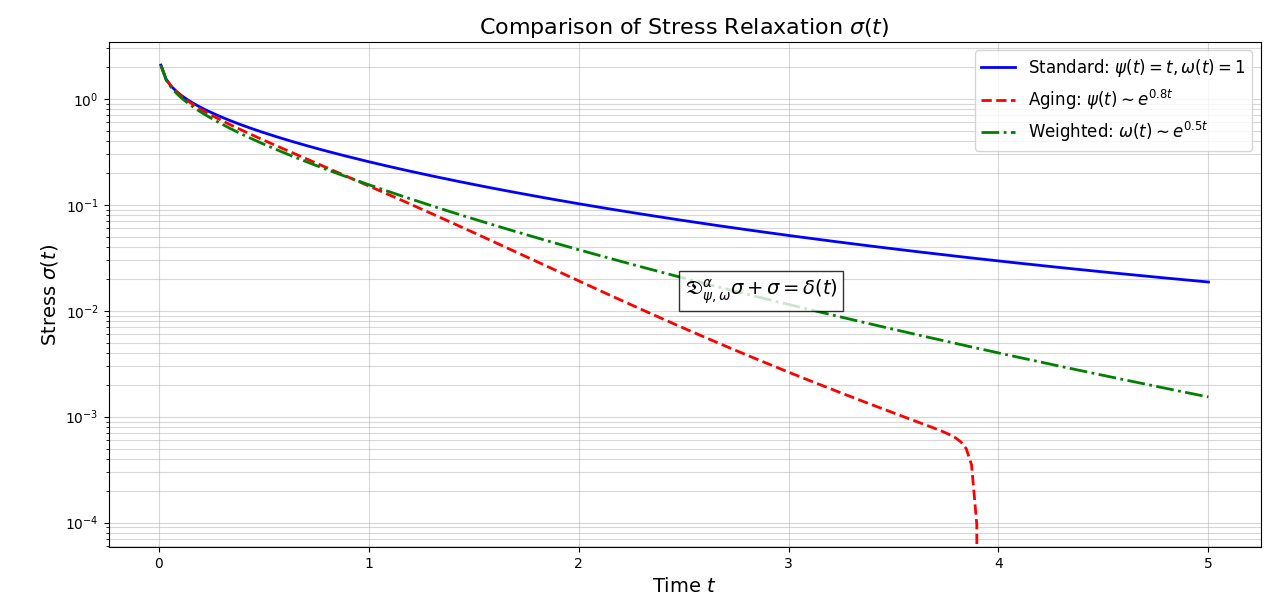}
    \caption{\textbf{Numerical validation of the Amnesia Phenomenon (Corollary \ref{cor:amnesia_decay}).} 
    Stress relaxation response computed via the weighted convolution formula \eqref{eq:exact_sol_conv} for $\alpha=0.8$.
    \textbf{(Solid Black)} Standard Model ($\psi(t)=t$): confirms the algebraic decay $O(t^{-1.8})$ predicted by the Green's function, implying long memory.
    \textbf{(Dashed Red)} Rapid Aging ($\psi(t) = e^{0.8t}$): illustrates the theoretical ``Amnesia'' effect; the exponential stretching of subjective time causes the stress to vanish exponentially fast.
    \textbf{(Dash-dotted Blue)} Weighted Damping ($\omega(t) = e^{0.5t}$): shows how the weight density modulates the amplitude without altering the spectral memory structure.}
    \label{fig:relaxation_comparison}
\end{figure}

\section{Conclusions}

In this work, we have established a rigorous spectral framework for the analysis of non-local operators in aging media. By introducing the Weighted Weyl Fractional Derivative $\mathfrak{D}_{\psi, \omega}^{\alpha}$ and constructing the associated symmetric Weighted Fourier Transform, we have successfully bridged the gap between the mathematical theory of weighted spaces and the physical modeling of time-varying systems.

Our main contributions can be summarized as follows:

\begin{itemize}
    \item \textbf{Spectral Diagonalization:} We proved that the weighted Weyl operator is unitarily equivalent to a multiplication operator in the frequency domain. This structural isomorphism allowed us to bypass the limitations of the classical Laplace transform, which is rendered ineffective by the lack of time-translation invariance in aging systems.

    \item \textbf{Hypersingular Representation:} We established the equivalence between the spectral definition and the hypersingular integral form of Marchaud type. This result bridges the gap between the abstract operator theory and the time domain, offering a clear physical interpretation of the fractional derivative as a weighted accumulation of past state differences, and providing a direct formula for numerical discretization.
    
    \item \textbf{The Amnesia Phenomenon:} Analytically, we demonstrated that the memory retention of a fractional system is dictated by the topology of its subjective time scale $\psi(t)$. We proved that while polynomial aging preserves the heavy-tailed power-law memory characteristic of fractional dynamics, exponential aging induces an ``Amnesia Phenomenon,'' effectively truncating the infinite history into a short-range exponential relaxation. This resolves the apparent paradox of fading memory in structurally evolving materials.
    
    \item \textbf{Exact Solvability via Convolution:} We derived the explicit solution for the aging Kelvin-Voigt model with infinite memory on $\mathbb{R}$. By defining a weighted convolution and identifying the corresponding Green's function, we provided a closed-form representation of the stress response that is computationally tractable and theoretically transparent.
\end{itemize}

In summary, the spectral tools developed in this paper provide a robust mathematical foundation for modeling complex systems with subjective time scales. These results not only clarify the theoretical properties of weighted fractional operators but also offer a practical framework for future experimental validations in the field of aging viscoelasticity.

\end{document}